\documentclass[10pt,a4paper]{amsart}

\usepackage[all]{xy}
\usepackage{amsthm}
\usepackage{amssymb} 
\usepackage{amsmath,amscd}
\usepackage{mathrsfs}
\usepackage{color}
\usepackage{bm}
\usepackage{enumerate}

\usepackage{adjustbox}

\setcounter{tocdepth}{1}

\def\turn!{\textup{!`}}

\def\mrb{\mathrm{b}}

\def\mre{\mathrm{e}}

\def\kk{{\mathbf k}}

\def\ZZ{{\Bbb Z}}

\def\sfr{{\mathsf{r}}}

\def\sfD{{\mathsf{D}}}

\def\sfT{{\mathsf{T}}}

\def\tuD{{\textup{D}}}

\def\tuH{{\textup{H}}}

\def\tuT{{\textup{T}}}

\def\id{\operatorname{id}}

\def\mod{\operatorname{mod}}

\def\Hom{\operatorname{Hom}}

\newcommand{\RHom}{\operatorname{\Bbb{R}Hom}}

\newcommand{\lotimes}{\otimes^{\Bbb{L}}}

\def\RHom{\operatorname{\mathbb{R}Hom}}

\newtheorem{lemma}{Lemma}[section]
\newtheorem{proposition}[lemma]{Proposition}
\newtheorem{theorem}[lemma]{Theorem}
\newtheorem{corollary}[lemma]{Corollary}

\newtheorem{remark}[lemma]{Remark}

\newtheorem{definition}[lemma]{Definition}

\theoremstyle{remark}

\def\SL{\operatorname{SL}}

\def\YM{\Lambda}

\def\YYM{\widetilde{\Lambda}}

\def\PPi{\widetilde{\Pi}}

\title[The Hilbert series of the preprojective algebras]
{
The Hilbert series of the preprojective algebras 
}

\date{}

\author{Hiroyuki Minamoto}

\date{}

\keywords{Hilbert series. Preprojective algebras, Quiver Heisenberg algebras, Central extension of preprojective algebras}

\subjclass[2010]{Primary: 16G20}

\address{Department of Mathematics Osaka Metropolitan University, Sakai City, Japan}
\email{minamoto@omu.ac.jp}

\begin{document}
\maketitle
%
%
%
%
%
%

%
%
%
%
%
%
%

\begin{abstract}
The aim of this short note is to prove the formula of the Hilbert series of the preprojective algebras in arbitrary characteristic 
by making effective use of the formulas of the Hilbert series of  differential graded (dg) algebras with Adams grading. 
We also compute the Hilbert series of the quiver Heisenberg algebras, a special class of central extensions of preprojective algebras.  
\end{abstract}

\section{Introduction}

In this note, $\kk$ denotes a field and a quiver  $Q$ is  assumed to be connected, finite and acyclic unless otherwise is mentioned. 

The  aim of this short note is to prove the formula of the Hilbert series of the preprojective algebras $\Pi(Q)$ (Theorem \ref{202005031400}, 
Theorem \ref{202303221917}) by making effective use of the formulas of the Hilbert series of  differential graded (dg) algebras with Adams grading.   

The formula were first obtained by Malkin, Ostrik and Vybornov \cite[p.516, Theorem]{MOV} in the characteristic zero case. 
They used theory of  module categories over representations of quantum $\SL(2)$. 
(We note that they dealt with the preprojective algebra $\Pi(Q)$ of $T$-type, which is not dealt in this paper.)

Later Etingof and Eu \cite[Theorem 3.4.2]{Etingof-Eu}  dealt with a non-Dynkin quiver $Q$ which is \emph{not necessarily acyclic}  
and proved that the same formula holds true in any characteristic. 
As a consequence they showed that the preprojective algebra $\Pi(Q)$ is Koszul where $Q$ is  non-Dynkin  and not necessarily acyclic.  
Their idea is to introduced the \emph{partial preprojective algebras} that allows them to reduce the problem of $Q$ to that of a smaller quiver 
$Q'$. 
However, as one of the point to which the problem for a general quiver is finally reduced, 
it is necessary for them to  compute the Hilbert series $h_{\Pi(Q)}$ for an extended-Dynkin quiver $Q$ before establishing general formula.
This was done in \cite[Proposition 3.2.1]{Etingof-Eu} by a rather complicated method. 

In this paper, 
we see that the formula  of the Hilbert series $h_{\Pi(Q)}$ for  a non-Dynkin quiver can be  easily deduced  from a well-know fact in representation theory of a quiver, if once we establish basic properties of the Hilbert series of differential graded algebras with Adams grading. 
By the same method, we also compute the Hilbert series of the quiver Heisenberg algebras (QHA), a special class of central extensions of preprojective algebras \cite{Etingof-Rains, QHA}.  
Finally, using a result of QHA and basic fact of representation theory of quiver, 
we establish the formula for the Hilbert series $h_{\Pi(Q)}$ for a Dynkin quiver $Q$ in arbitrary characteristic.

\section*{Acknowledgment}
The author thanks Martin Herschend for a suggestion that leads to  Lemma \ref{202402182021}.

This note stemmed from discussion  in the preparations of  Winter School ``Koszul Algebra and Koszul Duality'' at Osaka City University 2021,  
which the author co-organized with  Aaron Chan, Ryo Kanda and Hyohe Miyachi. 
The author thanks them for helpful discussions. 
This work was partly supported by Osaka City University Advanced Mathematical Institute : 
MEXT Joint Usage Research Center on Mathematics and Theoretical Physics JPMXP 0619217849

The  author was partially supported by JSPS KAKENHI Grant Number JP21K03210.

\section{Hilbert series of complexes and differential graded (dg) algebras  with Adams grading}\label{202005031353} 

When we deal with a differential graded (dg) algebra equipped with an extra grading, 
to distinguish it from  the cohomological grading, 
the extra grading is often called \emph{Adams grading}. 
In the paper we follow this terminology. 
Then  a usual graded algebra can be said as an algebra with Adams grading.

In this Section \ref{202005031353}, $r\geq 1$ is a positive integer 
and 
$A = \kk^{\times r} = \kk e_{1} \times \cdots \times \kk e_{r}$ 
where $e_{i}$ is the $i$-th canonical idempotent element:
\[ 
e_{i} = ( \stackrel{1}{0}, \cdots,  \stackrel{i-1}{0},  \stackrel{i}{1},  \stackrel{i+ 1}{0}, \cdots,  \stackrel{r}{0}).\]

We denote \emph{the dimension matrix} $(\dim e_{i} M e_{j})_{ij}$ of a finite dimensional $A$-$A$-bimodule $M$ by $[M]$. 
Then, it is clear that 
\[
 [A] = 1 \ \textup{(the unit matrix)},  \ [M^{*}] = [M]^{t} \ \textup{(transpose)}, \ [M\otimes_{A} N ]  = [M][N] 
\]
where $M^{*} = \Hom_{\kk}(M, \kk)$.

We regard $A$ as an Adams graded algebra concentrated in degree $0$. 
Recall that 
for an Adams  graded $A$-$A$-bimodule $M = \bigoplus_{ i} M_{i}$ which is locally finite dimensional (i.e., $\dim M_{i} < \infty$ for all $i\in\ZZ$) 
and bounded below (i.e.,  $ M_{i} = 0$ for $ i \ll 0$), 
its Hilbert series $h_{M}(t)$ is defined to be 
\[
h_{M}(t) := \sum_{i} [M_{i}]t^{i}. 
\]

To define the Hilbert series of complexes of $A$-$A$-bimodules with Adams grading, we need to introduce a boundedness condition. 

\begin{definition}\label{202303141831}
We call a complex  $M = ( \bigoplus_{n}M^{n}, d_{M})$ of graded $A$-bimodules $M^{n} = \bigoplus_{i } M_{i}^{n}$ 
\emph{suitably bounded} if it satisfies the following two properties:
\begin{enumerate}[(1)]

\item For each $i \in \ZZ$, we have $\dim \bigoplus_{n \in \ZZ} M_{i}^{n} < \infty$. 

In other words, for each $i$, the complex $M_{i}^{\bullet}$ is bounded and each term $M_{i}^{n}$ is finite dimensional. 

\item There exists $i_{0} \in \ZZ$ such that $M_{i}^{\bullet} = 0$ for all $i < i_{0} $.

\end{enumerate}
\end{definition}

For a suitably bounded complex $M$,  its \emph{Hilbert series} $h_{M}(t)$ is defined to be 
\[
h_{M}(t): = \sum_{n} ( -1)^{n} h_{M^{n}}(t) = \sum_{i,n} ( -1)^{n}[M_{i}^{n}]t^{i}. 
\]
\begin{remark}
It may be worth noting that $h_{M}(t)$ does not depend on the differential $d_{M}$ but 
only depends on the underlying cohomological graded $A$-$A$-bimodule $M^{\#}$ with Adams grading.  
\end{remark}

Let $n \in \ZZ$. We denote by $(n)$ (resp. $[n]$) the shift operation with respect to the 
Adams degree (resp. the cohomolgical degree), i.e., $(M[m](j))_{i}^{n} :=M_{i+j}^{n+m}$.

We collect basic properties which can be proved by standard arguments.

\begin{lemma}
Let $M$ and $N$ be suitably bounded complexes of graded $A$-bimodules.
The following assertions hold. 
\begin{enumerate}[(1)]
\item $h_{M[-n](-m)}(t) = ( -1)^{n}t^{m}h_{M}(t)$. 

\item $h_{M}(t) = h_{\tuH(M)}(t)$. 

\item If $M$ and $N$  are quasi-isomorphic, then $h_{M}(t) = h_{N}(t)$. 

\item $h_{M \otimes_{A} N} (t) = h_{M}(t) h_{N}(t)$. 
\end{enumerate}
\end{lemma}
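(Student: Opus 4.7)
The plan is to prove all four assertions by a direct computation from the definition of $h_M(t)$, with (2) being the only substantive ingredient and (1), (3), (4) following from it or from simple bookkeeping.

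For (1), I would just unwind the shift conventions $(M[-n](-m))_i^k = M_{i-m}^{k-n}$ and reindex the double sum in the definition of $h_{M[-n](-m)}(t)$; this produces a factor $(-1)^n$ from the cohomological shift and $t^m$ from the Adams shift. For (2), the key observation is that suitable boundedness lets me isolate, for each fixed Adams degree $i$, a bounded complex $M_i^{\bullet}$ of finite-dimensional $A$-$A$-bimodules, so that the classical Euler-characteristic identity
\[
\sum_n (-1)^n [M_i^n] \;=\; \sum_n (-1)^n [\tuH^n(M_i^{\bullet})]
\]
holds in the ring of $r \times r$ integer matrices. Since taking cohomology commutes with the Adams-grading decomposition, $\tuH^n(M_i^{\bullet}) = \tuH^n(M)_i$, and multiplying by $t^i$ and summing over $i$ yields $h_M(t) = h_{\tuH(M)}(t)$. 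Assertion (3) is then immediate, because a quasi-isomorphism induces an isomorphism on cohomology and suitable boundedness passes to cohomology.

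For (4), I would first verify that suitable boundedness is preserved under $\otimes_A$: if $M_i^{\bullet} = 0$ for $i < i_0$ and $N_j^{\bullet} = 0$ for $j < j_0$, then $(M\otimes_A N)_i^{\bullet}$ vanishes for $i < i_0 + j_0$, and for each fixed $i$ the finite direct sum $\bigoplus_{j+k=i} M_j^{\bullet} \otimes_A N_k^{\bullet}$ consists of finite-dimensional summands. Then, invoking the multiplicativity $[M_j^p \otimes_A N_k^q] = [M_j^p][N_k^q]$ of dimension matrices recalled at the beginning of Section \ref{202005031353}, the chain of equalities
\[
h_{M\otimes_A N}(t) = \sum_{i,n}(-1)^n \sum_{\substack{j+k=i\\ p+q=n}} [M_j^p]\,[N_k^q]\, t^i = h_M(t)\, h_N(t)
\]
follows by reorganising the double sum. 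The only substantive step is (2), where the Euler-characteristic identity requires both the term-wise finite-dimensionality and the boundedness of $M_i^{\bullet}$ in each Adams degree; I do not anticipate any further obstacle.
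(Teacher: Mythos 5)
Your proof is correct, and it consists precisely of the standard arguments the paper alludes to when it states the lemma without proof ("basic properties which can be proved by standard arguments"): reindexing for (1), the per-Adams-degree Euler-characteristic identity (valid thanks to condition (1) of suitable boundedness) for (2) and hence (3), and multiplicativity of dimension matrices together with the finiteness of the convolution sums guaranteed by the bounded-below condition for (4). Nothing is missing.
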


From the last assertion of the lemma, we deduce 
\begin{corollary}\label{202005031416} 
Let $M$ be a suitably bounded complex of Adams graded $A$-$A$-bimodules 
and $T$ denotes the tensor algebra of $M$ 
\[
T = \tuT_{A} M =A \oplus M \oplus M\otimes_{A}M \oplus \cdots .
\]
Assume that  $M_{i}^{\bullet} = 0$ for all $i \leq 0$. 
Then $T$ is suitably bounded and  the Hilbert series $h_{T}(t)$ is 
\[
h_{T}(t) = \sum_{n \geq 0}h_{M}(t)^{n} = \frac{1}{1 -h_{M}(t)}.
\]
\end{corollary}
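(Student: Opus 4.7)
The plan is to deduce the formula by iteratively applying part~(4) of the preceding lemma to each tensor power $M^{\otimes n}$, after first verifying that $T$ itself is suitably bounded. Both the suitable boundedness of $T$ as an infinite direct sum and the convergence of the resulting geometric series will follow from the single hypothesis that $M$ is concentrated in strictly positive Adams degrees.

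For suitable boundedness, observe that a general nonzero homogeneous summand of $M^{\otimes n}$ in total Adams degree $i$ has the form $M_{i_1}^{\bullet}\otimes_A\cdots\otimes_A M_{i_n}^{\bullet}$ with $i_1+\cdots+i_n=i$ and each $i_k\geq 1$, so such summands can exist only when $i\geq n$. For each fixed $i$, only finitely many compositions of $i$ into $n$ parts $\geq 1$ contribute, and each corresponding tensor product is a bounded complex of finite-dimensional $A$-bimodules; hence every $M^{\otimes n}$ is suitably bounded. Furthermore, in the direct sum $T=\bigoplus_{n\geq 0} M^{\otimes n}$ only the summands with $n\leq i$ contribute in Adams degree $i$, so $T$ is itself suitably bounded.

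For the Hilbert series, iterated application of part~(4) of the preceding lemma gives $h_{M^{\otimes n}}(t)=h_M(t)^n$, with the convention $h_A(t)=1$. Since each coefficient of $t^i$ receives contributions from only finitely many tensor powers, one may interchange the Hilbert-series operation with the direct sum to obtain
\[
h_T(t)=\sum_{n\geq 0} h_{M^{\otimes n}}(t)=\sum_{n\geq 0} h_M(t)^n.
\]
The positivity hypothesis ensures that $h_M(t)$ has vanishing constant term, so the geometric series on the right is a well-defined element of the ring of formal power series in $t$ with matrix coefficients, summing to $(1-h_M(t))^{-1}$. The only point requiring care---rather than a genuine obstacle---is to coordinate the two roles played by positivity in Adams degree: it is needed both to control $T$ as a suitably bounded complex and to turn $\sum_n h_M(t)^n$ into an honest formal power series, so that the manipulation $\sum h_M(t)^n = (1-h_M(t))^{-1}$ is meaningful.
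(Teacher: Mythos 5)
Your proof is correct and follows exactly the route the paper intends: the corollary is deduced from part~(4) of the preceding lemma by taking $h_{M^{\otimes n}}(t)=h_M(t)^n$ and summing the geometric series, with the positivity hypothesis on the Adams degree guaranteeing both the suitable boundedness of $T$ and the convergence of $\sum_n h_M(t)^n$ as a formal power series. Your write-up merely spells out the finiteness bookkeeping that the paper leaves implicit.
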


\section{The Hilbert series of the preprojective algebras} 

\subsection{The preprojective algebras}

Let $Q$ be a connected finite acyclic quiver with the vertexes $Q_{0} =\{ 1,2, \dots, r\}$.  
We set $A:= \kk Q_{0}= \kk e_{1} \times \cdots \times \kk e_{r}$ and equip the arrow space  
$V : = \kk Q_{1}$ with a canonical $A$-$A$-bimodule structure. 
If we regard the path algebra $\kk Q$ as an Adams graded algebra by path length, 
then it  is the tensor algebra over $A$ of the bimodule $V( -1)$
\[
\kk Q = \tuT_{A}(V(-1))  
\]
and hence  by Corollary \ref{202005031416} 
the Hilbert series $h_{\kk Q}(t)$ is 
\[
h_{\kk Q}(t) = \sum_{n \geq 0}([V]t)^{n} = \frac{1}{ 1 -[V] t}. 
\]

We denote by $\overline{Q}$ the double of $Q$. 
Namely, $\overline{Q}$ is obtained from $Q$ by formally adding 
an opposite arrow $\alpha^{*}: j \to i$ for each arrow $\alpha: i \to j$ of the original quiver $Q$. 
\[
\begin{xymatrix}{ & & \\ \boxed{ \ Q \ } &   i \ar[rr]^{\alpha} &&j }\end{xymatrix} \ \ \ \  \begin{xymatrix}@R=15pt@C=5pt{
\ar@{~}[ddd] \\  \\  \\ \\ 
}\end{xymatrix}  \ \ \ \ \ \ 
\begin{xymatrix}{ & \\ 
i \ar@/^10pt/[rr]^{\alpha} &&
 j  \ar@/^10pt/[ll]^{\alpha^{*}} &\boxed{ \ \overline{Q} \ } 
}\end{xymatrix} 
\]
Then the arrow space $\kk \overline{Q}_{1}$ can be identified with $V \oplus V^{*}$ where $V^{*} := \Hom_{\kk}(V, \kk)$.   
By the definition, the adjacency matrix $C$ of $\overline{Q}$ is given by 
\[C= [V]+ [V]^{t} = [V] +[V^{*}] = [\kk \overline{Q}_{1}]. 
\]

We recall the definition of  the preprojective algebra $\Pi(Q)$ of a quiver $Q$.  
For a vertex $i \in Q_{0}$, the \emph{mesh relation  $\rho_{i}$ at $i$} is the element of $\kk \overline{Q}$ 
that is given by 
\[ 
\rho_{i} := \sum_{\alpha \in Q_{1}: t(\alpha) = i} \alpha\alpha^{*} - \sum_{\alpha \in Q_{1}: h(\alpha) = i} \alpha^{*} \alpha. 
\]
The total sum  $\rho:= \sum_{ i \in Q_{0}} \rho_{i}$ is also  called  the \emph{mesh relation}.
The \emph{preprojective algebra} is defined to be the path of $\overline{Q}$ with mesh relations: 
\[
\Pi = \Pi(Q) =\frac{ \  \kk \overline{Q} \ }{ \   (\rho) \  } = \frac{ \kk \overline{Q} }{ (\rho_{i} \mid i \in Q_{0}) }.  
\]
We equip $\overline{Q}$ with an Adams grading  by path length: 
 \[ 
\textup{Ad}\deg \alpha := 1, \ \textup{Ad}\deg \alpha^{*} :=1 \textup{ for } \alpha \in Q_{1}. 
  \]
The mesh relations $\rho_{i}$ are homogeneous of Adams degree $2$ and consequently $\Pi(Q)$ is a graded algebra. 
Since the degree $n$-part of $\Pi(Q)_{n}$ of $\Pi(Q)$ is finite dimensional for all $n \geq 0$, we can take the Hilbert series 
\[
h_{\Pi(Q)}(t) := \sum_{n \geq 0} [\Pi(Q)_{n}] t^{n}.
\]

The formula for a non-Dynkin quiver is given in the following theorem. 

\begin{theorem}[{\cite{Etingof-Eu,MOV}}]\label{202005031400}
Assume that  $Q$  is a non-Dynkin quiver. 
Then the Hilbert series $h_{\Pi}(t)$ is given by the following formula: 
\[
h_{\Pi}(t) = \frac{1}{1 - Ct + t^{2} }. 
\]
\end{theorem}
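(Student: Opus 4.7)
The plan is to realize $\Pi=\Pi(Q)$ as the zeroth cohomology of a free dg algebra with Adams grading, whose Hilbert series is immediate from Corollary~\ref{202005031416}, and then to invoke the well-known vanishing of higher cohomology of this dg algebra in the non-Dynkin case.

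Concretely, I would introduce the \emph{derived (Ginzburg) preprojective algebra}
\[
\PPi := \tuT_{A}\bigl(\,V(-1)\oplus V^{*}(-1)\oplus A[1](-2)\,\bigr),
\]
viewed as a dg algebra with the unique $A$-bilinear differential of cohomological degree $+1$ whose restriction to the summand $A[1](-2)$ sends the idempotent $e_{i}$ to the mesh element $\rho_{i}\in \kk\overline{Q}\subset\PPi^{0}$. Each summand of the bimodule $W:=V(-1)\oplus V^{*}(-1)\oplus A[1](-2)$ is concentrated in strictly positive Adams degrees, so Corollary~\ref{202005031416} applies and gives
\[
h_{\PPi}(t)=\frac{1}{1-h_{W}(t)}.
\]
The summands $V(-1)$ and $V^{*}(-1)$ sit in cohomological degree $0$ and Adams degree $1$, contributing $[V]t+[V^{*}]t=Ct$ to $h_{W}$, while $A[1](-2)$ sits in cohomological degree $-1$ and Adams degree $2$, so by the sign convention of Hilbert series it contributes $(-1)^{-1}[A]t^{2}=-t^{2}$. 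Hence $h_{W}(t)=Ct-t^{2}$ and
\[
h_{\PPi}(t)=\frac{1}{1-Ct+t^{2}}.
\]

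To conclude I would invoke the standard fact from the representation theory of quivers: for a connected non-Dynkin acyclic quiver $Q$, the dg algebra $\PPi$ has cohomology concentrated in degree zero where it coincides with $\Pi(Q)$. Equivalently, the canonical bimodule complex
\[
0\to \Pi\otimes_{A}\Pi(-2)\to \Pi\otimes_{A}(V\oplus V^{*})\otimes_{A}\Pi(-1)\to \Pi\otimes_{A}\Pi \to \Pi\to 0
\]
is exact, expressing the $2$-Calabi--Yau property of $\Pi$. Granting this, the quasi-isomorphism $\PPi\simeq \Pi$ combined with the quasi-isomorphism invariance of Hilbert series recorded in the preceding lemma yields $h_{\Pi}(t)=h_{\PPi}(t)=(1-Ct+t^{2})^{-1}$, as required.

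The only nontrivial input is the cohomological vanishing $\tuH^{<0}(\PPi)=0$ for non-Dynkin $Q$; everything else is a formal manipulation of Hilbert series via Corollary~\ref{202005031416}. This vanishing is the ``well-known fact in representation theory of a quiver'' alluded to in the introduction and goes back to Crawley-Boevey--Holland in its most explicit form, with a derived/Calabi--Yau interpretation due to Keller and Ginzburg; it is valid in arbitrary characteristic. Once accepted as a black box, the theorem drops out in a single line, bypassing entirely the reduction to and separate computation for extended Dynkin types in \cite{Etingof-Eu}.
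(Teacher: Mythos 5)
Your proposal is correct and follows essentially the same route as the paper: introduce the derived preprojective algebra $\PPi$ as the tensor algebra $\tuT_{A}(V(-1)\oplus V^{*}(-1)\oplus A[1](-2))$ with differential sending the degree $-1$ loops to the mesh relations, compute $h_{\PPi}(t)=(1-Ct+t^{2})^{-1}$ from Corollary \ref{202005031416}, and then use the non-Dynkin vanishing to identify $\PPi\simeq\Pi$ and conclude by quasi-isomorphism invariance. The only (inessential) difference is how the key vanishing is packaged: the paper cites the fact that $\nu_{1}^{-n}(\kk Q)$ is concentrated in cohomological degree $0$ for non-Dynkin $Q$ (via \cite[Example 2.8]{HIO}), whereas you invoke the equivalent statement $\tuH^{<0}(\PPi)=0$, phrased through exactness of the standard bimodule complex of $\Pi$.
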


Proof is given in Section \ref{202402211804}.

\subsubsection{The Dynkin case} 

Let $Q$ be a Dynkin quiver. To give a formula of the Hilbert series of $\Pi :=\Pi(Q)$, 
we need to introduce the Nakayama permutation matrix $P:= P_{\nu}$. 

First recall that a Nakayama automorphism  $\nu: \Pi \to \Pi$ is an algebra automorphism 
such that there is an isomorphism ${}_{\nu}\Pi \cong \tuD(\Pi)$ of $\Pi$-$\Pi$-bimodules. 
We use the explicit form of a Nakayama automorphism $\nu$ of $\Pi$ is given by Brenner-Butler-King  \cite[Definition 4.6]{BBK}. 
Then $\nu$ induces a permutation of the vertexes $Q_{0}$ which we denote by the same symbol $\nu$, 
i.e., for each $i \in Q_{0}$, there exists a unique vertex $\nu(i)$ such that $\Pi \nu(e_{i}) \cong \Pi e_{\nu(i)}$ as $\Pi$-modules. 
Then the Nakayama permutation matrix $P:= P_{\nu}$ is defined to be $P : = (\delta_{\nu(i)j})_{i,j\in Q_{0}}$ so that we have $[{}_{\nu}M] = P[M]$ for any $\Pi$-$\Pi$-bimodules $M$.
We note that the matrices $P$ and $C$ are commutative. 
Indeed, let $J$ denotes the Jacobson radical of $\Pi$. Then $C = [\kk \overline{Q}_{1}]= [J/J^{2}]$. 
It follows from Lemma \ref{202402182021} that $PC = CP$. 

Now we can give the formula for a Dynkin quiver $Q$. 

\begin{theorem}[{\cite{MOV} in the  characteristic zero case}]\label{202303221917}
Assume that  $Q$ is a Dynkin quiver. 
Then the Hilbert series $h_{\PPi}(t)$ is given by the following formula
\[
h_{\PPi}(t) = \frac{1+Pt^{h}}{1 - Ct + t^{2} }.
\]
\end{theorem}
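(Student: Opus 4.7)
The plan is to apply the strategy used in the non-Dynkin case (Theorem \ref{202005031400}) to the quiver Heisenberg algebra $\Lambda = \Lambda(Q)$, and then descend to $\Pi(Q)$ via the central extension. The Frobenius structure of $\Pi$ in the Dynkin case furnishes an extra Nakayama-twisted numerator, producing the stated formula.

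First, invoke the central extension $\Lambda(Q)/(z) \cong \Pi(Q)$, where $z$ is a central element of Adams degree $2$ that is a non-zero divisor on $\Lambda$ (established in the QHA references). This yields the short exact sequence of graded $A$-bimodules
\[
0 \to \Lambda(-2) \xrightarrow{\,\cdot z\,} \Lambda \to \Pi \to 0,
\]
and the Hilbert-series machinery of Section \ref{202005031353} gives $h_\Pi(t) = (1 - t^2)\,h_\Lambda(t)$. It therefore suffices to show
\[
h_\Lambda(t) \;=\; \frac{1 + P t^h}{(1 - t^2)(1 - Ct + t^2)}.
\]

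Second, apply the dg-algebra Hilbert-series method of Section \ref{202005031353} to a bimodule resolution of $\Lambda$, which is the promised ``result of QHA''. The representation-theoretic input in the Dynkin case is the basic fact that $\Pi(Q)$ has Loewy length $h - 1$ with socle concentrated in Adams degree $h - 2$ and isomorphic to ${}_\nu A$ as an $A$-bimodule. This fact---a standard consequence of the AR-theoretic identification $\tau^{h-2}P_j \cong I_{\nu(j)}$ together with the Brenner--Butler--King description of $\nu$---guarantees that the bimodule resolution of $\Lambda$ acquires an extra Nakayama-twisted term at the top, whose contribution to the alternating Hilbert-series identity is $P t^h$. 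Thus
\[
(1 - Ct + t^2)(1 - t^2)\,h_\Lambda(t) \;=\; 1 + P t^h,
\]
and the commutativity $PC = CP$ (Lemma \ref{202402182021}) ensures that the matrix quotient is unambiguous.

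The main obstacle is pinning down the precise form of the Nakayama-twisted top term in the resolution of $\Lambda$ for Dynkin $Q$, namely showing that its Hilbert-series contribution is exactly $P t^h$ rather than some other matrix or Adams degree; here the socle identification above and the explicit form of $\nu$ are both essential. Once this is verified, combining the two Hilbert-series identities gives
\[
h_\Pi(t) = (1 - t^2)\cdot \frac{1 + P t^h}{(1 - t^2)(1 - Ct + t^2)} = \frac{1 + P t^h}{1 - Ct + t^2},
\]
completing the proof.
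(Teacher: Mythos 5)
Your first step fails precisely in the Dynkin case: the central element $z$ (equivalently ${}^{v}\!\varrho$) is \emph{not} a non-zero-divisor on $\Lambda={}^{v}\!\YM(Q)$ when $Q$ is Dynkin. Indeed ${}^{v}\!\YM$ is finite dimensional for regular $v$ (\cite[Theorem 9.1]{QHA}, quoted in the paper), so multiplication by a central element of Adams degree $2$ cannot be injective. The correct statement, and the heart of the paper's argument, is the four-term exact sequence
\[
0 \to \tuD(\Pi)(-2h+2) \to {}^{v}\!\YM(-2) \xrightarrow{\ \sfr_{\varrho}\ } {}^{v}\!\YM \to \Pi \to 0 ,
\]
whose kernel term $\tuD(\Pi)(-2h+2)\cong{}_{\nu}\Pi(-h)$ (via $\tuD(\Pi)\cong{}_{\nu}\Pi(h-2)$ from \cite{BBK}) is exactly where the numerator $1+Pt^{h}$ comes from. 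Your claimed short exact sequence would give $h_{\Pi}=(1-t^{2})h_{\Lambda}$, which is false for Dynkin $Q$.

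Your second step is also incorrect, and the two errors cancel to produce the right final formula: the Hilbert series of the quiver Heisenberg algebra in the Dynkin case is
\[
h_{{}^{v}\!\YM}(t)=\frac{1-t^{2h}}{(1-Ct+t^{2})(1-t^{2})},
\]
obtained in the paper from $\tuH({}^{v}\!\YYM)\cong{}^{v}\!\YM[u]$ (\cite[Theorem 9.9]{QHA}) together with Proposition \ref{202303161404}; it is \emph{not} $\frac{1+Pt^{h}}{(1-Ct+t^{2})(1-t^{2})}$, and the Nakayama permutation $P$ does not appear in $h_{\Lambda}$ at all. The asserted ``Nakayama-twisted top term'' in a bimodule resolution of $\Lambda$ is neither established nor available in the form you need. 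The workable route (the paper's) is: for a regular $v$ (after a field extension if necessary), compute $h_{{}^{v}\!\YM}$ as above, take the alternating Hilbert series of the four-term sequence to get $h_{\Pi}-h_{{}^{v}\!\YM}+t^{2}h_{{}^{v}\!\YM}-t^{h}Ph_{\Pi}=0$, and then use $\nu^{2}=\id$ (so $P^{2}=1$) and $PC=CP$ (Lemma \ref{202402182021}) to rewrite $(1-Pt^{h})^{-1}(1-t^{2h})$ as $1+Pt^{h}$.
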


Proof is given in Section \ref{202402211805}.

%
%
%
%
%
%
%
%
%
%

\subsection{The derived preprojective algebras of a finite acyclic quiver $Q$}

We recall the definition of the derived preprojective algebra $\PPi :=\PPi(Q)$ of a quiver $Q$. 
The underlying cohomological graded algebra $\PPi^{\#}$  with Adams grading of $\PPi$ is given by the cohomological graded quiver 
$\widehat{Q}$ obtained from $\overline{Q}$ by formally adding a loop $s_{i}$ of $\textup{Ad}\deg s_{i} = 2, \textup{coh}\deg s_{i} = -1$ 
to each vertex $i \in Q_{0}$.
\[
\begin{xymatrix}{ 
i \ar@(ld, lu)^{s_{i}} \ar@/^5pt/[rr]^{\alpha} && j  \ar@(rd,ru)_{s_{j}} \ar@/^5pt/[ll]^{\alpha^{*}} && \ \boxed{ \ \widehat{Q} \ } 
}\end{xymatrix}
\]
In other words, 
$\PPi^{\#}$ is the tensor algebra over $A=\kk Q_{0}$ of the bimodule $ V(-1) \oplus V^{*}(-1) \oplus A[1](-2)$
\[
\PPi^{\#} = \tuT_{A}(V(-1) \oplus V^{*}(-1) \oplus A[1](-2))
\]
The differential $d$ of $\PPi$ is given by 
$
d (\alpha ) := 0, d(\alpha^{*} ) := 0, d(s_{i}) := - \rho_{i}.$ 
The values of $d$ for general homogeneous elements are determined from the Leibniz rule 
$d(xy) = d(x) y + ( -1)^{|x|}x d(y)$.

%
%
%
%
%
%
%
%

Since the algebra $\PPi^{\#}$ is a tensor algebra over $A$, 
we can immediately obtain a formula for the Hilbert series $h_{\PPi}$.

\begin{proposition}\label{20230351859}
The Hilbert series $h_{\PPi}(t)$ of the derived preprojective algebra $\PPi$ is given by the following formula
\[
h_{\PPi}(t) = \frac{1}{1 - Ct + t^{2} }.
\]
\end{proposition}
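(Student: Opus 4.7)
The plan is to apply Corollary \ref{202005031416} directly to the tensor-algebra description of $\PPi^{\#}$ displayed just before the proposition. Since the Hilbert series of a suitably bounded complex depends only on the underlying Adams-graded cohomological bimodule (not on the differential), we can compute $h_{\PPi}(t) = h_{\PPi^{\#}}(t)$, so the differential $d$ of $\PPi$ plays no role.

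First I would check that the generating bimodule $M := V(-1) \oplus V^{*}(-1) \oplus A[1](-2)$ is suitably bounded and satisfies $M_{i}^{\bullet} = 0$ for $i \leq 0$, which is immediate: the summands $V(-1)$ and $V^{*}(-1)$ live in Adams degree $1$ and cohomological degree $0$, and $A[1](-2)$ lives in Adams degree $2$ and cohomological degree $-1$. This ensures the hypothesis of Corollary \ref{202005031416} holds, so that the tensor algebra $\tuT_{A}(M)$ is itself suitably bounded and
\[
h_{\tuT_{A}(M)}(t) \; = \; \frac{1}{1 - h_{M}(t)}.
\]

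Next I would compute $h_{M}(t)$ using additivity of the Hilbert series on direct sums together with the shift formula $h_{M[-n](-m)}(t) = (-1)^{n} t^{m} h_{M}(t)$ from the lemma in Section \ref{202005031353}. Using $[A] = 1$, $[V] + [V^{*}] = C$, and $h_{A[1](-2)}(t) = (-1)^{-1} t^{2} \cdot 1 = -t^{2}$, I get
\[
h_{M}(t) \; = \; [V] \, t + [V^{*}] \, t - t^{2} \; = \; C \, t - t^{2}.
\]

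Substituting into the corollary yields
\[
h_{\PPi}(t) \; = \; \frac{1}{1 - (Ct - t^{2})} \; = \; \frac{1}{1 - Ct + t^{2}},
\]
as desired. There is no real obstacle here; the argument is a direct bookkeeping exercise in the formalism of Section \ref{202005031353}. The only point that requires a small amount of care is the sign contribution $-t^{2}$ coming from the loop generators $s_{i}$, which sit in odd cohomological degree and therefore enter the Hilbert series with a negative sign; this is precisely what converts a naive $\frac{1}{1 - Ct}$ (the path-algebra answer for $\overline{Q}$) into the desired $\frac{1}{1 - Ct + t^{2}}$.
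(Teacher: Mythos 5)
Your proposal is correct and follows essentially the same route as the paper's own proof: compute $h_{V(-1)\oplus V^{*}(-1)\oplus A[1](-2)}(t)=Ct-t^{2}$ and apply Corollary \ref{202005031416} to the tensor-algebra presentation of $\PPi^{\#}$, noting that the Hilbert series ignores the differential. The sign bookkeeping for the loops $s_{i}$ is exactly the point the paper's calculation uses as well, so there is nothing to add.
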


\begin{proof}
First,  
we observe 
\[
h_{V(-1) \oplus V^{*}(-1) \oplus A[1](-2)} (t) = [V]t+[V^{*}]t - [A]t^{2} = Ct -t^{2}. 
\]
Applying  Corollary \ref{202005031416},  we complete the calculation  
\[
\begin{split} 
h_{\PPi}(t)  = h_{\PPi^{\#}}(t) =  \frac{1}{1- (Ct -t^{2})}  =\frac{1}{1- Ct +t^{2}}.  
\end{split}
\]
\end{proof}

\subsubsection{}

We recall basic properties of  the derived preprojective algebra. 
There is a canonical homomorphism $\PPi(Q) \to \Pi(Q)$ of dg-algebras with Adams grading, 
which induces an isomorphism $\tuH^{0}(\PPi(Q)) \xrightarrow{\cong } \tuH^{0}(\Pi(Q))=\Pi(Q)$ of Adams graded algebras.
The dg-algebra $\PPi(Q)$ is 
a model of Keller's $2$-Calabi-Yau completion of $\kk Q$ \cite{Keller: Calabi-Yau completion}, i.e., 
it is  quasi-isomorphic to the derived  tensor algebra of 
$\RHom_{\kk Q^{\mre}}(\kk Q, \kk Q^{\mre})[1] \cong 
\RHom_{\kk Q}((\kk Q)^{*}, \kk Q)[1]$ over $\kk Q$ (see for example \cite[Section 6.1]{QHA}). 
We  note that the endofunctor $\RHom_{\kk Q}((\kk Q)^{*}, \kk Q)[1]\lotimes_{\kk Q}- $ of the derived category $\sfD^{\mrb}(\kk Q\mod)$ 
is  the inverse $\nu^{-1}_{1}$ of $(-1)$-shifted Nakayama $\nu_{1}:= (\kk Q)^{*} [-1]\lotimes_{\kk Q} -$. 
\[
\PPi(Q) \simeq \bigoplus_{n \geq 0} (\RHom_{\kk Q}((\kk Q)^{*}, \kk Q)[1])^{\lotimes_{\kk Q} n}\simeq \bigoplus_{n\geq 0} \nu_{1}^{-n}(\kk Q)
\]

\subsection{Proof of Theorem \ref{202005031400}}\label{202402211804}

In the case where $Q$ is non-Dynkin,  
it is well-known that the complex $\nu_{1}^{-n}(\kk Q)$ is concentrated in $0$-th cohomological degree for all $n\geq 0$ and hence
 $\PPi(Q)$ is  quasi-isomorphic to $\Pi(Q)$ (see \cite[Example 2.8]{HIO}).  
Therefore, we conclude \[h_{\Pi}(t) = h_{\PPi}(t)=\frac{1}{1- Ct +t^{2}}. \] 
\qed

\section{The Hilbert series of the quiver Heisenberg  algebras}

\subsection{The quiver Heisenberg algebras}

Let $Q$ be a quiver. 
For an element $v =(v_{i})_{i \in Q_{0}} \in \kk Q_{0}$, we define the  central extension  ${}^{v}\!\YM :={}^{v}\!\YM(Q)$ of the preprojective algebra to be 
\[
{}^{v}\!\YM(Q) := \frac{\kk[z]\overline{Q}}{( \rho_{i} -v_{i}z e_{i} \mid i \in Q_{0} )}.
\]
This is a restricted version of the central extensions of preprojective algebras studied in \cite{Etingof-Rains}. 
Looking from another viewpoint  ${}^{v}\!\YM(Q)$ can be  regarded as a deformation family of a preprojective algebra. 
More general deformation families are  studied in \cite{Crawley-Boevey-Holland, QVB}. 
We set $\textup{Ad}\deg z :=2$ so that ${}^{v}\YM(Q)$ acquires  Adams grading.

In \cite{QHA}, the algebra ${}^{v}\!\YM$ is studied under the name \emph{quiver Heisenberg algebras} 
mainly in  the case where $v \in \kk Q_{0}$ is sincere (i.e., $v_{i} \neq 0$ for all $i \in Q_{0}$). 
We set ${}^{v}\varrho_{i} := v_{i}^{-1}\rho_{i}, \ {}^{v}\!\varrho :=\sum_{i \in Q_{0}}{}^{v}\!\varrho_{i}$ for $i\in Q_{0}$ and
${}^{v}\!\eta_{a} := a {}^{v}\!\varrho - {}^{v}\!\varrho a$ for $a \in \overline{Q}_{0}$. 
It is shown in \cite[Lemma 1.6]{QHA} that if $v\in \kk Q_{0}$ is sincere, then the algebra ${}^{v}\!\YM$ has an  isomorphisms
\[
{}^{v}\YM \xrightarrow{ \cong } \frac{  \kk \overline{Q}}{ \ ({}^{v}\!\eta_{a} | a \in \overline{Q}_{1}) \ }
\]
that preserves $\overline{Q}$ and sends $z$ to ${}^{v}\!\varrho$. 
It is clear that this isomorphism preserves  Adams grading.

To establish a formula of the Hilbert series of ${}^{v}\!\YM(Q)$, we introduce the derived quiver Heisenberg algebra ${}^{v}\YYM(Q)$.

\subsection{The derived quiver Heisenberg algebras}

Assume that $v\in \kk Q_{0}$ is sincere. 
We define the \emph{derived quiver Heisenberg algebra} ${}^{v}\!\YYM := {}^{v}\!\YYM(Q)$ in the following way. 
The underlying cohomological graded algebra ${}^{v}\!\YYM^{\#}$ is given by 
\begin{equation}\label{20191127}
\sfT_{A}(V(-1) \oplus V^{*}(-1) \oplus V^{*}[1](-3) \oplus V[1](-3) \oplus A[1](-4))  
\end{equation}
where $A = \kk Q_{0}$ and $V= \kk Q_{1}$. 
A part of corresponding cohomorogical graded quiver with Adams grading is given as below 
where we denote by $\alpha^{\circledast}$ the element of $V[1](-3)$ corresponding to $\alpha \in V$ and 
by $\alpha^{\circ}$ the element of $V^{*}[1](-3)$ corresponding to $\alpha^{*} \in V^{*}$. 
\[
\begin{xymatrix}{ 
i \ar@(ld, lu)^{t_{i}} \ar@/_20pt/[rr]_{\alpha^{\circledast}} \ar@/^10pt/[rr]_{\alpha} && j  \ar@(rd,ru)_{t_{j}} \ar@/^10pt/[ll]_{\alpha^{*}} 
\ar@/_20pt/[ll]_{\alpha^{\circ}}
}\end{xymatrix}
\]

The differential $d$ of ${}^{v}\!\YYM$ is  defined in the following way. 
Then the underlying cohomological graded algebra \eqref{20191127}
 is freely generated by $\alpha, \alpha^{*}, \alpha^{\circ}, \alpha^{\circledast}, t_{i}$ 
for $\alpha \in Q_{1}$ and $i\in Q_{0}$. 
The values of $d$ for these generators are defined by the formulas:
\[
\begin{split}
&d(\alpha) := 0, d(\alpha^{*}) := 0,  d(\alpha^{\circ}) :  = - {}^{v}\!\eta_{\alpha^{*}},
 d(\alpha^{\circledast}) :  =  {}^{v}\!\eta_{\alpha}, \\
& d(t_{i})
= \sum_{\alpha: t(\alpha) = i} \alpha \alpha^{\circ}-  \sum_{\alpha: h(\alpha) = i} \alpha^{\circ} \alpha +
\sum_{\alpha: h(\alpha) = i} \alpha^{*} \alpha^{\circledast}-  \sum_{\alpha: t(\alpha) = i} \alpha^{\circledast} \alpha^{*}.
\end{split}
\]
The values of $d$ for general homogeneous elements are determined from the Leibniz rule 
$d(xy) = d(x) y + ( -1)^{|x|}x d(y)$.

In the same way of Proposition \ref{20230351859} we can compute the Hilbert series of the derived quiver Heisenberg algebra ${}^{v}\!\YYM$. 

\begin{proposition}\label{202303161404}
In the above setting,  we have 
\[
h_{{}^{v}\!\YYM}(t) = \frac{1}{1-Ct +Ct^{3}-t^{4}}= \frac{1}{(1-Ct+t^{2})(1-t^{2})}.
\]
\end{proposition}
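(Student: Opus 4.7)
The plan is to follow the proof of Proposition~\ref{20230351859} almost verbatim. Since $h_{{}^{v}\!\YYM}(t) = h_{{}^{v}\!\YYM^{\#}}(t)$ (the Hilbert series depends only on the underlying cohomologically graded bimodule), and since by construction ${}^{v}\!\YYM^{\#}$ is the tensor algebra $\sfT_{A}M$ over $A = \kk Q_{0}$ of the Adams-graded bimodule $M$ displayed in \eqref{20191127}, the entire task reduces to evaluating $h_{M}(t)$ and feeding the result into Corollary~\ref{202005031416}.

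I would compute $h_{M}(t)$ summand by summand using the shift rule $h_{N[-n](-m)}(t) = (-1)^{n}t^{m}h_{N}(t)$ together with the identity $[V] + [V^{*}] = C$. The two Adams-degree-$1$ summands $V(-1) \oplus V^{*}(-1)$ contribute $Ct$; the two cohomologically shifted Adams-degree-$3$ summands $V^{*}[1](-3) \oplus V[1](-3)$ each pick up a factor $(-1)^{-1} = -1$ and together contribute $-Ct^{3}$; and the cohomologically shifted $A$-summand contributes $+t^{4}$. Summing,
$$h_{M}(t) = Ct - Ct^{3} + t^{4}.$$

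Invoking Corollary~\ref{202005031416} then gives
$$h_{{}^{v}\!\YYM}(t) = \frac{1}{1 - h_{M}(t)} = \frac{1}{1 - Ct + Ct^{3} - t^{4}},$$
and a direct expansion verifies $(1-Ct+t^{2})(1-t^{2}) = 1 - Ct + Ct^{3} - t^{4}$, producing the second form in the statement. There is essentially no obstacle: the argument is a mechanical application of Corollary~\ref{202005031416}, in complete parallel to Proposition~\ref{20230351859}. The only care required is the sign bookkeeping from the cohomological shifts, which is what makes the coefficients of $t^{3}$ and $t^{4}$ in $h_{M}(t)$ come out with exactly the signs needed for the clean factorization $(1-Ct+t^{2})(1-t^{2})$.
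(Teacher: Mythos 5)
Your overall route is exactly the paper's: the paper proves Proposition \ref{202303161404} ``in the same way as Proposition \ref{20230351859}'', i.e.\ by computing the Hilbert series of the generating bimodule in \eqref{20191127} and applying Corollary \ref{202005031416}, which is precisely what you do. However, your sign bookkeeping — the one point you yourself identify as ``the only care required'' — is internally inconsistent as written. You apply the rule $h_{N[-n](-m)}(t)=(-1)^{n}t^{m}h_{N}(t)$ to the summands $V^{*}[1](-3)\oplus V[1](-3)$ and correctly get the factor $(-1)$, hence $-Ct^{3}$; but then you assert that the summand displayed as $A[1](-4)$ contributes $+t^{4}$, whereas the same rule applied to $A[1](-4)$ gives $-t^{4}$. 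With $-t^{4}$ the denominator would be $1-Ct+Ct^{3}+t^{4}$, which does \emph{not} equal $(1-Ct+t^{2})(1-t^{2})$, so as stated your computation does not close.

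The resolution is that the loop generators $t_{i}$ sit in cohomological degree $-2$, not $-1$: the differential $d(t_{i})=\sum \alpha\alpha^{\circ}-\cdots$ is a sum of products of a degree-$0$ arrow with a degree-$(-1)$ arrow, hence has cohomological degree $-1$, and since $d$ raises cohomological degree by $1$, the generator $t_{i}$ must have degree $-2$ (and Adams degree $4$). In other words, the last summand in \eqref{20191127} should be read as $A[2](-4)$ rather than $A[1](-4)$ (a misprint in the display), and then the sign is $(-1)^{-2}=+1$, giving the contribution $+t^{4}$ you wrote. With this justification supplied, $h_{M}(t)=Ct-Ct^{3}+t^{4}$, Corollary \ref{202005031416} yields $h_{{}^{v}\!\YYM}(t)=1/(1-Ct+Ct^{3}-t^{4})$, and the factorization $(1-Ct+t^{2})(1-t^{2})=1-Ct+Ct^{3}-t^{4}$ completes the proof exactly as in the paper. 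So the gap is not in the strategy but in the unexplained sign of the $A$-summand: either you silently corrected the misprint or you adjusted the sign to fit the answer; the argument needs the degree of $t_{i}$ made explicit to be complete.
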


\subsection{The non-Dynkin case} 

\begin{theorem}\label{202303161409}
Let $Q$ be a non-Dynkin quiver and $v \in \kk Q_{0}$. 
Then we have 
\[
h_{{}^{v}\!\YM}(t) = \frac{1}{(1-Ct+t^{2})(1-t^{2})}.
\]
\end{theorem}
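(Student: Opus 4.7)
The plan is to mirror the proof of Theorem \ref{202005031400}. For sincere $v$ I would compare the quiver Heisenberg algebra ${}^{v}\!\YM$ with its derived version ${}^{v}\!\YYM$, whose Hilbert series is already given by Proposition \ref{202303161404}; the resulting formula will then be extended to arbitrary $v$ by a deformation/semicontinuity argument.

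Suppose first that $v \in \kk Q_{0}$ is sincere. The canonical morphism of dg-algebras ${}^{v}\!\YYM(Q) \to {}^{v}\!\YM(Q)$ obtained by killing the generators $\alpha^{\circ}, \alpha^{\circledast}, t_{i}$ of negative cohomological degree induces an isomorphism on $\tuH^{0}$ via the presentation ${}^{v}\!\YM \cong \kk\overline{Q}/({}^{v}\!\eta_{a} \mid a \in \overline{Q}_{1})$ of \cite[Lemma 1.6]{QHA}. The main task is to show that this morphism is a quasi-isomorphism when $Q$ is non-Dynkin. Paralleling the realization of $\PPi(Q)$ as a model of Keller's $2$-Calabi-Yau completion of $\kk Q$, the expectation is that ${}^{v}\!\YYM$ admits a derived-categorical description whose higher cohomology is again controlled by iterates of the inverse shifted Nakayama functor $\nu_{1}^{-1}$ on $\sfD^{\mrb}(\kk Q\mod)$. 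The standard fact that $\nu_{1}^{-n}(\kk Q)$ is cohomologically concentrated in degree zero for all $n \geq 0$ when $Q$ is non-Dynkin (\cite[Example 2.8]{HIO}) would then yield $\tuH^{<0}({}^{v}\!\YYM) = 0$, and Proposition \ref{202303161404} delivers
\[
h_{{}^{v}\!\YM}(t) \; = \; h_{{}^{v}\!\YYM}(t) \; = \; \frac{1}{(1-Ct+t^{2})(1-t^{2})}
\]
for every sincere $v$.

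To reach arbitrary $v \in \kk Q_{0}$, I would run an upper-semicontinuity argument in two stages. Since the sincere locus is open and dense in $\kk Q_{0}$, the generic value of $\dim {}^{v}\!\YM_{n}$ equals the coefficient $d_{n}$ of $t^{n}$ in the target formula, so upper semicontinuity of fiber dimension for the family $\{{}^{v}\!\YM_{n}\}_{v \in \kk Q_{0}}$ forces $\dim {}^{v}\!\YM_{n} \geq d_{n}$ for every $v$. For the reverse inequality, fix $v$ and consider the one-parameter family $\{{}^{\lambda v}\!\YM\}_{\lambda \in \AA^{1}}$: the rescaling $z \mapsto \lambda z$ furnishes an isomorphism ${}^{\lambda v}\!\YM \cong {}^{v}\!\YM$ of graded algebras for $\lambda \neq 0$, while at $\lambda = 0$ we have ${}^{0}\!\YM = \Pi(Q) \otimes_{\kk} \kk[z]$, whose Hilbert series is $\tfrac{1}{(1-Ct+t^{2})(1-t^{2})}$ by Theorem \ref{202005031400} combined with $h_{\kk[z]}(t) = \tfrac{1}{1-t^{2}}$. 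A second application of upper semicontinuity (this time over $\AA^{1}$) yields $\dim {}^{v}\!\YM_{n} \leq d_{n}$, and the two inequalities combine.

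The main obstacle is the quasi-isomorphism ${}^{v}\!\YYM \simeq {}^{v}\!\YM$ in the sincere non-Dynkin case. Unlike the preprojective setup, the presence of the central parameter $z$ and the coefficients $v_{i}^{-1}$ appearing in the differential of ${}^{v}\!\YYM$ mean that the derived-category interpretation must track an extra parameter and accommodate the additional generators $\alpha^{\circ}, \alpha^{\circledast}, t_{i}$; the vanishing of higher cohomology must be reduced to a suitably deformed analogue of the standard non-Dynkin vanishing for $\nu_{1}^{-n}(\kk Q)$. Once that derived interpretation is in place, the rest of the argument, including the semicontinuity step, should be fairly routine.
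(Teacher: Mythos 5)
Your overall architecture matches the paper's: handle sincere $v$ by comparing ${}^{v}\!\YM$ with the derived quiver Heisenberg algebra and invoking Proposition \ref{202303161404}, then extend to arbitrary $v$ by a family argument. The problem is that the step you yourself flag as ``the main obstacle'' --- that for non-Dynkin $Q$ and sincere $v$ the canonical map ${}^{v}\!\YYM \to {}^{v}\!\YM$ is a quasi-isomorphism --- is exactly the content of \cite[Proposition 7.1]{QHA}, and the paper's proof of the sincere case consists of citing that result. You neither invoke it nor prove it: your sketch (``the expectation is that ${}^{v}\!\YYM$ admits a derived-categorical description whose higher cohomology is controlled by iterates of $\nu_{1}^{-1}$'') is only an analogy with the preprojective situation, and establishing such a description for the deformed dg algebra, with the extra generators $\alpha^{\circ}, \alpha^{\circledast}, t_{i}$ and the parameters $v_{i}$, is precisely the nontrivial work done in \cite{QHA}. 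So as written the proposal's central step is an acknowledged gap rather than a proof; it becomes a complete argument only once you cite \cite[Proposition 7.1]{QHA} (or reproduce its proof).

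Your treatment of non-sincere $v$ is a genuinely different route from the paper and is essentially sound. The paper base-changes the Crawley--Boevey--Holland family $\Pi_{\bullet}$, known to be flat over $R$, along $x_{i}\mapsto y_{i}z$, and uses that each Adams-graded piece is a finite \emph{flat} $T$-module, so the fiber dimension is literally constant in $v$. You instead use two semicontinuity inequalities: the generic value gives $\dim {}^{v}\!\YM_{n}\ge d_{n}$, and degenerating the one-parameter family ${}^{\lambda v}\!\YM$ to $\lambda=0$, where the fiber is $\Pi\otimes_{\kk}\kk[z]$ with Hilbert series known from Theorem \ref{202005031400}, gives $\dim {}^{v}\!\YM_{n}\le d_{n}$. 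This trades the flatness theorem for the already-proved non-Dynkin preprojective formula, which is a reasonable and arguably more elementary alternative. Two small points need care: you must check that each Adams-graded piece of the family is a finite module over the base ring (true, since $\overline{Q}$ is finite), and in the first inequality ``generic value $=d_{n}$'' should be justified at the generic point of $\Spec \kk[y_{1},\dots,y_{r}]$ --- the generic fiber is the quiver Heisenberg algebra over $\kk(y_{1},\dots,y_{r})$ with parameter $(y_{1},\dots,y_{r})$, which is sincere, so the sincere case applies; density of sincere $\kk$-points alone is not quite enough (and fails to make sense over a finite field without first extending scalars, which is harmless for Hilbert series).
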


\begin{proof}
First assume that $v \in \kk Q_{0}$ is sincere. 
Then  if $Q$ is non-Dynkin, then the derived quiver Heisenberg algebra ${}^{v}\!\YYM$ is quasi-isomorphic to 
the quiver Heisenberg algebra ${}^{v}\!\YM$ by \cite[Proposition 7.1]{QHA}. 
Thus we deduce the desired conclusion  from Proposition \ref{202303161404}. 

We deal with the general case. 
Recall that we label the vertexes as $Q_{0} = \{ 1,2, \dots, r\}$. 
We set $R:= \kk[x_{1}, \ldots, x_{r}]$ with $\textup{Ad}\deg x_{1} := 2, \dots, \textup{Ad}\deg x_{r} := 2$.
It is well-known that the deformation family 
\[
\Pi_{\bullet} :=\Pi(Q)_{\bullet} := \frac{R\overline{Q}}{ (\rho_{i} -x_{i}e_{i} \mid i \in Q_{0}) } 
\] 
introduced by Crawley-Boevey-Holland \cite{Crawley-Boevey-Holland} is flat over $R$ (see e.g. \cite[Theorem 7.3]{QHA}). 
Let $S:= \kk[y_{1},\dots, y_{r}, z]$ with $\textup{Ad}\deg y_{1} := 0, \dots, \textup{Ad}\deg y_{r} := 0, \textup{Ad}\deg z := 2$
 and $\phi: R \to S$ the $\kk$-algebra homomorphism such that $\phi(x_{i}) = y_{i}z$ for all $i\in Q_{0}$. 
We set ${}^{\bullet}\!\YM := S\otimes_{R} \Pi_{\bullet}$. 
Let $T:= \kk[y_{1}, \dots, y_{r}]$ and $\psi: T \hookrightarrow S$ the canonical inclusion. 
Since $S$ is flat over $T$, the algebra ${}^{\bullet}\!\YM$ is flat over $T$. 
Observe that for all $n \geq 0$, the Adams degree $n$-part ${}^{\bullet}\!\YM_{n}$ is a finite $T$-module and hence it is  a finite flat module over $T$.

For $v\in \kk Q_{0}$, we set  $\kappa(v) := T/(y_{1} -v_{1}, \dots, y_{r} -v_{r}) \cong \kk$, the corresponding residue field. 
Then the Adams graded algebra $\kappa(v) \otimes_{T} {}^{\bullet}\!\YM$ is isomorphic to ${}^{v}\!\YM$. 
Hence we have an isomorphism    $\kappa(v) \otimes_{T} {}^{\bullet}\!\YM_{n} \cong {}^{v}\!\YM_{n}$ of  $\kk$-vector spaces for each $n \geq 0$.  
It follows from flatness of ${}^{\bullet}\!\YM_{n}$ over $T$ that the $\kk$-dimension of ${}^{v}\!\YM_{n}$ is independent from $v \in \kk Q$. 
Thus we conclude that the desired formula holds true eve when $v$ is not sincere.  
\end{proof}

\subsection{The Dynkin case}

In this section $Q$ is a Dynkin quiver. 
To deal with the Dynkin case, we need to introduce the notion of regularity.
An element  $v \in \kk Q_{0}$ is called \emph{regular} if $ \sum_{i \in Q_{0}} v_{i}  \dim (e_{i} N) \neq 0 $
 for any indecomposable $\kk Q$-module $N$. 
 We note that a regular element is sincere. 
It is shown in \cite[Theorem 9.1]{QHA} that ${}^{v}\!\YM(Q)$ is finite dimensional if and only if $v$ is regular. 

\begin{theorem}[{\cite{Etingof-Rains} in the characteristic zero case}]\label{202303161409}
Let $Q$ be a Dynkin quiver. Assume that $v\in \kk Q_{0}$ is regular. 
Then we have 
\[
h_{{}^{v}\!\YM}(t) = \frac{1-t^{2h}}{(1-Ct+t^{2})(1-t^{2})}.
\]
\end{theorem}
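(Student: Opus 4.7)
The approach parallels the proof of Theorem~\ref{202303221917}, with the derived quiver Heisenberg algebra $\YYM$ playing the role of the derived preprojective algebra $\PPi$. Regularity of $v$ implies sincerity, so ${}^{v}\!\YYM$ is defined, and Proposition~\ref{202303161404} gives $h_{{}^{v}\!\YYM}(t)=\frac{1}{(1-Ct+t^{2})(1-t^{2})}$. Since the Hilbert series of a suitably bounded complex equals that of its cohomology, this also equals $\sum_{i\leq 0}(-1)^{i}h_{\tuH^{i}({}^{v}\!\YYM)}(t)$. The canonical dg-algebra morphism ${}^{v}\!\YYM\to{}^{v}\!\YM$ identifies $\tuH^{0}({}^{v}\!\YYM)\cong{}^{v}\!\YM$, so the problem reduces to computing the higher cohomology of ${}^{v}\!\YYM$.

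In the non-Dynkin case handled earlier in the section, the higher cohomology vanishes by \cite[Proposition 7.1]{QHA}, which is exactly why the stated formula there has no correction term. In the Dynkin case this vanishing fails. The key claim I would aim to prove---parallel to the Dynkin preprojective situation, where only $\tuH^{-1}(\PPi)$ survives among higher cohomology with Hilbert series $Pt^{h}h_{\PPi}(t)$---is that only $\tuH^{-2}({}^{v}\!\YYM)$ is nonzero among higher cohomology, with
\[
h_{\tuH^{-2}({}^{v}\!\YYM)}(t)=t^{2h}\,h_{{}^{v}\!\YYM}(t).
\]
Granting this, the alternating sum yields $h_{{}^{v}\!\YYM}(t)=h_{{}^{v}\!\YM}(t)+t^{2h}\,h_{{}^{v}\!\YYM}(t)$, whence
\[
h_{{}^{v}\!\YM}(t)=(1-t^{2h})\,h_{{}^{v}\!\YYM}(t)=\frac{1-t^{2h}}{(1-Ct+t^{2})(1-t^{2})},
\]
as claimed.

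The main obstacle is the cohomology description. The vanishing for non-Dynkin $Q$ relies on a twisted Calabi--Yau structure on ${}^{v}\!\YYM$; in the Dynkin case one must combine this structure with the well-known fact that, on $\sfD^{\mrb}(\kk Q\operatorname{mod})$, a suitable power of the Auslander--Reiten translation $\tau$ becomes a cohomological shift (twisted by the Nakayama permutation), with the Coxeter number $h$ governing the periodicity. The shift $t^{2h}$ rather than $t^{h}$, and the jump to $\tuH^{-2}$ rather than $\tuH^{-1}$, reflect that ${}^{v}\!\YYM$ is a richer Calabi--Yau-type completion than $\PPi$, carrying one extra layer of duality compared with the $2$-Calabi--Yau derived preprojective algebra.
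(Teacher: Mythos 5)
Your overall strategy is the same as the paper's: compute $h_{{}^{v}\!\YYM}$ from the tensor-algebra description (Proposition~\ref{202303161404}), use invariance of the Hilbert series under passing to cohomology, identify $\tuH^{0}({}^{v}\!\YYM)\cong{}^{v}\!\YM$, and extract $h_{{}^{v}\!\YM}$ from the structure of the higher cohomology. But the decisive step is exactly the one you leave as a ``key claim I would aim to prove,'' and as stated it is not correct structurally. By \cite[Theorem 9.9]{QHA} (which is what the paper invokes), for regular $v$ one has an isomorphism of Adams graded algebras $\tuH({}^{v}\!\YYM)\cong{}^{v}\!\YM[u]$ with $u$ of cohomological degree $-2$ and Adams degree $2h$; in particular the cohomology is nonzero in \emph{every} even non-positive degree $-2m$, with $h_{\tuH^{-2m}}(t)=t^{2mh}h_{{}^{v}\!\YM}(t)$, not concentrated in degree $-2$. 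Your claimed identity $h_{\tuH^{-2}({}^{v}\!\YYM)}(t)=t^{2h}h_{{}^{v}\!\YYM}(t)$ happens to equal the \emph{total} contribution $\sum_{m\geq 1}t^{2mh}h_{{}^{v}\!\YM}(t)$ of all higher cohomology (all signs are $+1$ since the degrees are even), so the resulting equation $h_{{}^{v}\!\YYM}=h_{{}^{v}\!\YM}+t^{2h}h_{{}^{v}\!\YYM}$ and the final formula are numerically right, but the structural statement you propose to prove is false, and nothing in the proposal proves even the correct version.

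Moreover, the sketch you give for establishing the claim (a twisted Calabi--Yau structure on ${}^{v}\!\YYM$ combined with AR-translation periodicity on $\sfD^{\mrb}(\kk Q\operatorname{mod})$) is a heuristic, not an argument; the periodicity with period $2h$ in Adams degree and $-2$ in cohomological degree is precisely the content of the polynomial structure $\tuH({}^{v}\!\YYM)\cong{}^{v}\!\YM[u]$, i.e.\ the nontrivial input the paper imports from \cite[Theorem 9.9]{QHA}. (Your side remark that in the Dynkin preprojective case only $\tuH^{-1}(\PPi)$ survives is likewise inaccurate and is not how the paper treats Theorem~\ref{202303221917}; there the paper uses the exact sequence $0\to\tuD(\Pi)(-2h+2)\to{}^{v}\!\YM(-2)\to{}^{v}\!\YM\to\Pi\to 0$ rather than the cohomology of $\PPi$.) To repair your proof, replace the key claim by the cited isomorphism $\tuH({}^{v}\!\YYM)\cong{}^{v}\!\YM[u]$, from which $h_{{}^{v}\!\YYM}=(1-t^{2h})^{-1}h_{{}^{v}\!\YM}$ follows immediately, and the theorem results by combining with Proposition~\ref{202303161404}; regularity of $v$ is what guarantees both sincerity and the applicability of that structure theorem.
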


\begin{proof}
By \cite[Theorem 9.9]{QHA}, we  have an isomorphism $\tuH({}^{v}\!\YYM) \cong {}^{v}\!\YM[u]$ of Adams graded algebras
 where the right hand side is the polynomial algebra in a single variable $u$ with cohomological degree $-2$ and Adams degree $2h$. 
Thus we have \[
h_{{}^{v}\!\YYM} = \sum_{n \geq 0} t^{2nh}h_{{}^{v}\!\YM}= (1-t^{2h})^{-1}h_{{}^{v}\!\YM}. 
\]
\end{proof}

\subsection{Proof of Theorem \ref{202303221917} }\label{202402211805} 
First note that by \cite[Definition 4.6]{BBK}, we have $\nu^{2} = \id_{\Pi}$. It follows that $P^{2} = 1$ and $(1-Pt^{h})(1+Pt^{h}) = 1-t^{2h}$.

Taking a field extension  of $\kk$ (if it is necessary), 
we may assume that there is a regular element $v \in \kk Q$ (see \cite[Example 5.11(2)]{QHA}). 
By \cite[Corollary 9.12, Remark 9.13]{QHA}, there exists an exact sequence of the following form 
\[
0 \to \tuD(\Pi)(-2h+2) \to {}^{v}\!\YM(-2) \xrightarrow{ \sfr_{\varrho}} {}^{v}\!\YM \to \Pi \to 0
\]
where $\sfr_{\varrho}$ denotes the multiplication map by $\varrho$. 
 On the other hand, we have $\tuD(\Pi) \cong {}_{\nu}\!\Pi(h-2)$ by \cite{BBK}. 
Thus, we have the equality 
$h_{\Pi}(t) -h_{{}^{v}\!\YM}(t) + t^{2}h_{{}^{v}\!\YM}(t) -t^{h}P h_{\Pi}(t) = 0$. 
Thus we obtain 
\[
h_{\Pi}(t) 
= \frac{1-t^{2}}{1-Pt^{h}}h_{{}^{v}\!\YM}(t) = 
\frac{1-t^{2h}}{(1-Pt^{h})(1-Ct+t^{2})} = 
\frac{1+Pt^{h}}{1-Ct+t^{2}}. 
\]
\qed

\appendix

\section{}

Let $B$ be a finite dimensional basic algebra and $\{e_{i}\}_{i=1}^{r}$ be a complete set of primitive orthogonal idempotent elements. 
We may regard $B$ as an algebra over $A:= \prod_{i=1}^{r} \kk e_{i}$. 
Let $\phi: B \to B$ be an algebra automorphism. We denote by $\overline{\phi}$ the induced permutation of $1,2,\dots, r$, i.e., 
for $i = 1,2,\dots, r$, $\overline{\phi}(i)$ denotes a unique number such that $B \phi(e_{i}) \cong B e_{\overline{\phi}(i)}$. 
We set $P:=(\delta_{\overline{\phi}(i)j})_{i,j=1}^{r}$. 
Finally let $J$ be the Jacobson radical of $B$ and $C:= [J/J^{2}]$  the dimension matrix of $B$-$B$-bimodule $J/J^{2}$.

\begin{lemma}\label{202402182021}
We have 
\[
PC=CP.
\]
\end{lemma}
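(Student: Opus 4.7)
The plan is to exhibit $J/J^{2}$ as isomorphic to its $\phi$-twisted bimodule ${}_{\phi}(J/J^{2})_{\phi}$, and then to read off the desired matrix identity by computing how such a twist acts on the dimension matrix. This is very much in the spirit of the identity $[{}_{\nu}M] = P[M]$ recalled in the body of the paper, but applied simultaneously on both sides.

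First I would unpack the combinatorial content of $P$. Since $B$ is basic, two primitive idempotents $e,f \in B$ satisfy $Be \cong Bf$ as left $B$-modules if and only if they are conjugate by a unit of $B$, and in that case also $eB \cong fB$ as right $B$-modules. Applied to the primitive idempotent $\phi(e_{i})$ (primitivity is preserved by the automorphism $\phi$), the defining relation $B\phi(e_{i}) \cong Be_{\overline{\phi}(i)}$ therefore upgrades to a conjugation in $B$, and in particular for \emph{any} $B$-$B$-bimodule $N$ one has the dimension identity
\[
\dim \phi(e_{i})\, N\, \phi(e_{j}) \;=\; \dim e_{\overline{\phi}(i)}\, N\, e_{\overline{\phi}(j)} \;=\; [N]_{\overline{\phi}(i),\overline{\phi}(j)}.
\]
Now let ${}_{\phi}N_{\phi}$ denote the $B$-$B$-bimodule with the same underlying vector space as $N$ but with action twisted on both sides: $a \cdot m \cdot b := \phi(a)\, m\, \phi(b)$. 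Then the left-hand side above is by definition the $(i,j)$-entry of $[{}_{\phi}N_{\phi}]$, so the identity reads
\[
[{}_{\phi}N_{\phi}] \;=\; P\,[N]\,P^{-1}.
\]

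Finally I would apply this with $N := J/J^{2}$. Since $\phi$ is an algebra automorphism, $\phi(J) = J$ and $\phi(J^{2}) = J^{2}$, so $\phi$ descends to a $\kk$-linear isomorphism $\overline{\phi}\colon J/J^{2} \to J/J^{2}$, and the computation
\[
\overline{\phi}(a\,\overline{x}\,b) \;=\; \overline{\phi(a)\phi(x)\phi(b)}
\]
identifies this $\kk$-linear map with a $B$-$B$-bimodule isomorphism $J/J^{2} \xrightarrow{\cong} {}_{\phi}(J/J^{2})_{\phi}$. Taking dimension matrices yields $C = P C P^{-1}$, which is the desired identity $CP = PC$. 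There is essentially no obstacle here; the only nonformal ingredient is the conjugacy of primitive idempotents in a basic algebra invoked in the first step, and the rest is bookkeeping.
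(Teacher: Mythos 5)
Your proof is correct and is essentially the paper's argument: the paper exhibits the isomorphism ${}_{\phi}(J/J^{2}) \cong (J/J^{2})_{\phi^{-1}}$ induced by the automorphism and uses $[{}_{\phi}M]=P[M]$, $[M_{\phi^{-1}}]=[M]P$, which is just the one-sided reformulation of your two-sided twist $J/J^{2}\cong {}_{\phi}(J/J^{2})_{\phi}$ with $[{}_{\phi}N_{\phi}]=P[N]P^{-1}$. Your conjugacy-of-idempotents step merely makes explicit what the paper labels ``straightforward to check.''
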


\begin{proof}
For a $B$-$B$-bimodule $M$, it is straightforward to check that  $[{}_{\phi}M] =P_{\overline{\phi}}[M]$ and $[M_{\phi^{-1}}] = [M]P_{\overline{\phi}}$.  

It is clear that the automorphism $\phi^{-1}: B \to B$ preserves $J$ and $J^{2}$ and  hence induces an isomorphisms 
$J/J^{2} \to J/J^{2}, \ j   \ \mod J^{2} \mapsto \phi^{-1}(j) \  \mod J^{2}$ of $\kk$-vector spaces. 
This map gives an isomorphism ${}_{\phi}(J/J^{2}) \xrightarrow{\cong} (J/J^{2})_{\phi^{-1}}$ of $B$-$B$-bimodules. 
Combining the above observation, we obtain the desired conclusion.  
\end{proof}

\end{document}